\documentclass[11pt,a4paper]{amsart}

\usepackage[english]{babel}
\usepackage{mathmio}
\title[Construction of HLS in $\CP^n$ and Hamiltonian stability]{Construction of homogeneous Lagrangian submanifolds in $\CP^n$ and Hamiltonian stability}
\author[D.~Petrecca \and F.~Podest\`a]{David Petrecca \and Fabio Podest\`a}
\address{Dipartimento di Matematica ``U. Dini''\endgraf Universit\`a degli Studi di Firenze\endgraf Viale Morgagni 67/A\endgraf Firenze, Italy}
\email{david.petrecca@gmail.com, podesta@math.unifi.it}
\subjclass[2010]{32J27, 53D12, 57S25}
\keywords{homogeneous spaces, Lagrangian submanifolds, Hamiltonian stability}

\begin{document}
\maketitle

\begin{abstract}
We apply the concept of castling transform of prehomogeneous vector spaces to produce new examples of minimal homogeneous Lagrangian submanifolds  in the complex projective space. Furthermore we verify the Hamiltonian stability of a low dimensional example that can be obtained in this way.
\end{abstract}

\section{Introduction}
Given a $2n$-dimensional K\"ahler manifold ($M,g,J$) with K\"ahler form $\omega$, a $n$-dimensional submanifold $L$ is said to be {\it Lagrangian\/} if the pull back of $\omega$ to $L$ vanishes. If there exists a Lie group $G$ of K\"ahler automorphisms of $M$ such that $L$ is a $G$-orbit, then $L$ is said to be a {\it homogeneous Lagrangian\/}. Such a class provides a large number of examples of Lagrangian submanifolds. \par
When $M = \CP^n$ and the group $G$ is compact and simple, a full classification of Lagrangian $G$-orbits has been obtained in \cite{art:bedulli}, while a full classification of homogeneous Lagrangian submanifolds of the quadrics has been achieved by Ma and Ohnita (\cite{MO}).  Our first result gives a way of producing new homogeneous Lagrangian submanifolds of the complex projective space starting from known ones. The construction is based on the main result of \cite{art:bedulli} and the castling transform, which will be explained in Section \ref{sec:pvs}, of a triple ($G,\rho, V$) consisting of a compact Lie group $G$, a complex vector space $V$ and a representation $\rho:G\to \GL(V)$.

\begin{thm}\label{cast} Let $(G,\rho,V)$ and $(G',\rho',V')$ be two triplets related by the castling transformation, where $G$ and $G'$ are two compact connected semisimple groups. Then the induced action of $G$ on $\P(V)$ admits a Lagrangian orbit if and only if the same holds for the $G'$-action on
$\P(V')$.
\end{thm}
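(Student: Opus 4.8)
The strategy is to reduce the equivalence to a criterion for the existence of a Lagrangian orbit that depends only on the complexified triple, and then to check that castling preserves every piece of data entering that criterion. Fix $(G,\rho,V)$ with $G$ compact connected semisimple and $\dim_{\C}V=n+1$; after averaging a Hermitian metric we may assume that $G$ acts on $\P(V)$ by Fubini--Study isometries, with equivariant moment map $\mu\colon\P(V)\to\mathfrak g^*$. For an orbit $\mathcal O=G\cdot[v]$ and $X,Y\in\mathfrak g$, equivariance gives $\omega(X^*,Y^*)_{[v]}=\pm\mu([v])([X,Y])$, and since $[\mathfrak g,\mathfrak g]=\mathfrak g$ the orbit $\mathcal O$ is isotropic if and only if $\mu([v])=0$. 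If moreover $\mathcal O$ is Lagrangian then $T_{[v]}\mathcal O\oplus J\,T_{[v]}\mathcal O=T_{[v]}\P(V)$, so $G^{\C}\cdot[v]$ is open in $\P(V)$, i.e. $(\C^{\times}\times G^{\C},V)$ is a prehomogeneous vector space; conversely, if $(\C^{\times}\times G^{\C},V)$ is prehomogeneous and its generic isotropy subgroup is reductive, a Kempf--Ness/GIT argument places a point $[v]$ with $\mu([v])=0$ inside the open orbit, and at such a point $\dim_{\R}G\cdot[v]=\dim_{\C}G^{\C}\cdot[v]=n$, so $G\cdot[v]$ is Lagrangian. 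This is the shape in which I would invoke the main result of \cite{art:bedulli}: $G$ has a Lagrangian orbit in $\P(V)$ if and only if $(\C^{\times}\times G^{\C},V)$ is prehomogeneous with reductive generic isotropy. The only features of the criterion I need below are that it is phrased through (a) prehomogeneity of $(\C^{\times}\times G^{\C},V)$ and (b) a property of the generic isotropy subgroup invariant under isomorphisms respecting the compact real structure.

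Next recall from Section \ref{sec:pvs} the two classical facts about the castling transform, both due to Sato--Kimura: it sends prehomogeneous vector spaces to prehomogeneous vector spaces, with an explicit correspondence between the open orbits; and castling-equivalent prehomogeneous vector spaces have isomorphic generic isotropy subgroups. It remains to match the compact and the complex pictures. If $(G,\rho,V)$ and $(G',\rho',V')$ are castling related as in the statement, then complexifying and adjoining the central $\C^{\times}$ turns this into an honest Sato--Kimura castling relating $(\C^{\times}\times G^{\C},V)$ and $(\C^{\times}\times(G')^{\C},V')$: the adjoined $\C^{\times}$ is absorbed into the scalars of the $\GL$-factor produced by castling, while $G$ and $G'$ reappear as maximal compact subgroups of the respective complex groups. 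It is here that one uses that $G$ and $G'$ are semisimple, so that complexifying and then adding a single $\C^{\times}$ yields exactly the connected reductive groups occurring in the castling transform.

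Putting these together: condition (a) holds for $(\C^{\times}\times G^{\C},V)$ exactly when it holds for $(\C^{\times}\times(G')^{\C},V')$, since castling preserves prehomogeneity; and, granting (a), the generic isotropy subgroups of the two prehomogeneous vector spaces are isomorphic, so (b) holds for one exactly when it holds for the other. Applying the criterion of \cite{art:bedulli} on each side, $G$ admits a Lagrangian orbit in $\P(V)$ if and only if $G'$ admits a Lagrangian orbit in $\P(V')$, which is the assertion.

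The genuinely delicate point is the compatibility claimed in the second paragraph. One must verify that the castling relation between the compact triples $(G,\rho,V)$ and $(G',\rho',V')$ — a priori a relation over $\R$ — complexifies to a Sato--Kimura castling with the central torus identified correctly, and, above all, that the isotropy property (b) used in \cite{art:bedulli} is preserved by the Sato--Kimura isomorphism of generic isotropy subgroups. If (b) is literally ``the generic isotropy subgroup is reductive'' this is immediate, reductivity being an abstract property of the algebraic group; if \cite{art:bedulli} instead formulates it through a compact real form of the isotropy subgroup, one needs that isomorphism to be realisable compatibly with the real structures, which holds because the castling construction and the relevant generic orbits can be taken defined over the compact real forms in question. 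Once this is in place, the rest of the argument is formal.
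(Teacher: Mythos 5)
Your overall strategy coincides with the paper's: translate the Lagrangian condition into a prehomogeneity\--plus\--reductive\--generic\--isotropy condition for $\GL(1)\times G^\C$ acting on $V$, then use the Sato--Kimura facts that castling preserves prehomogeneity and the isomorphism class of the generic isotropy. The gap is that you take the equivalence ``$G$ has a Lagrangian orbit in $\P(V)$ iff $(\C^{\times}\times G^\C,V)$ is prehomogeneous with reductive generic isotropy'' to be the form in which \cite{art:bedulli} can be invoked, whereas that result is a statement about the $G^\C$-action on $\P(V)$ (existence of an open Stein orbit there). The dictionary between the projective criterion and the affine one, in both directions, is precisely the nontrivial content of the proof, and your write-up does not supply it. Forward: from a Lagrangian orbit $G\cdot[p]$ you get openness of $(\GL(1)\times G^\C)\cdot p$ easily, but to identify the generic isotropy $\lie u_p=\{(X,z): Xp=-zp\}$ one must rule out a nonzero scalar component $z$; the paper does this by showing that the cone orbit $G\cdot p\subset V$ is isotropic and, by Gromov's theorem \cite{art:gromov} that a compact Lagrangian in a complex vector space has nontrivial $H^1$ (Lemma \ref{lemma:nolagr}), cannot be Lagrangian, hence is a finite cover of $G\cdot[p]$; this forces $\lie g_p=\lie g_{[p]}$, hence $z=0$ and $\lie u_p=(\lie g_{[p]})^\C$, which is reductive because the orbit is Lagrangian. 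Backward: after castling one has an open orbit $U'/H$ in $V'$ with $H$ reductive, but \cite{art:bedulli} requires the \emph{projective} orbit $U'/H'\subset\P(V')$ to be Stein; the paper derives reductivity of $H'$ from that of $H$ by observing that $H\leq H'$ is normal with $\dim_\C H'/H=1$ and running a short Lie-algebra argument. Neither step appears in your proposal.

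Relatedly, the ``genuinely delicate point'' you single out --- compatibility of real structures under the Sato--Kimura isomorphism of generic isotropies --- is not where the difficulty lies: reductivity is an abstract property of the algebraic group and transfers for free, exactly as you concede in your final sentence. The real work is the affine/projective passage above. Note also that your Kempf--Ness remark cannot be applied verbatim in $V$: the open $(\C^{\times}\times G^\C)$-orbit is dense, hence never closed in $V$, so the GIT argument must be run in $\P(V)$ --- which is what \cite{art:bedulli} already does, and why the paper invokes that result only in its projective formulation and builds the affine bridge by hand.
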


In \cite{art:oh2}, Oh introduced the notion of Hamiltonian stability for minimal Lagrangian submanifolds of a K\"ahler manifold $(M,g,\omega)$. Given a minimal Lagrangian submanifold $\imath: L \to M$, it is said to be \emph{Hamiltonian stable} if the second variation of the volume functional through Hamiltonian variations is nonnegative, where Hamiltonian variations correspond to normal vector fields $V$ such that the one form $\imath^*(i_V\omega)$ is exact. Hamiltonian stability for Lagrangian submanifolds of the complex projective space turns out to be a strictly weaker condition than the usual stability, since e.g. the standard real projective space $\RP^n\subset \CP^n$ is minimal and Hamiltonian stable, but not stable in the usual sense. If we endow $\CP^n$ with the standard Fubini-Study metric $g_{\textup{FS}}$ with holomorphic sectional curvature $c$, then Oh (\cite{art:oh2}) proved that a minimal Lagrangian submanifold $L$ is stable if and only if the first eigenvalue $\lambda_1(L)$ for the Laplacian $\Delta$ relative to the induced metric and acting on $C^\infty(L)$ satisfies $\lambda_1(L)\geq \frac{n+1}{2}\ c$. Actually, since $\lambda_1(L)\leq \frac{n+1}{2}\ c$ for every minimal Lagrangian submanifold of $\CP^n$ by a result due to Ono (\cite{ono}), we see that stability is equivalent to $\lambda_1(L) = \frac{n+1}{2}\ c$.\par
It is a natural and interesting problem to classify all minimal, Hamiltonian stable Lagrangian submanifolds of $\CP^n$. In \cite{AO}, Amarzaya and Ohnita prove that every minimal Lagrangian submanifold with parallel second fundamental form is actually stable, while Bedulli and Gori (\cite{art:bedulli_stab}) and independently Ohnita (\cite{ohn}) exhibited the first example of a Hamiltonian stable Lagrangian submanifold which has non-parallel second fundamental form. This example sits inside $\CP^3$ and is homogeneous under the action of the group $\SU2$. Again using the castling transform, we are able to provide a new, low dimensional example,
\begin{thm}\label{ex} The group $G = \SU2\times \SU2$ acts in a standard way on $V = S^2(\C^2)\otimes \C^2 \cong \C^6$ and its induced action on $\CP^5$ has a minimal, Hamiltonian stable Lagrangian orbit $L$ with non-parallel second fundamental form. The fundamental group $\pi_1(L)$ is isomorphic to $\Z_4$.
\end{thm}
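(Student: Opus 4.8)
The plan is to realise $L$ by means of Theorem~\ref{cast} and then to establish, in turn, its minimality, the computation of $\pi_1(L)$, the non‑parallelism of its second fundamental form, and its Hamiltonian stability.

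Identifying $S^2(\C^2)\otimes\C^2$ with $\C^3\otimes\C^2$, where the first $\SU2$ acts on $\C^3$ through the covering $\SU2\to\mathrm{SO}(3)$, the triple $(\SU2\times\SU2,\,S^2(\C^2)\otimes\C^2)$ is exactly the castling transform of $(\SU2,\,S^2(\C^2))\cong(\mathrm{SO}(3),\C^3)$: here $\dim\C^3=3$, the auxiliary factor is $\mathrm{SL}(2,\C)$ (so $m=2$), and the complementary factor $\mathrm{SL}(3-2,\C)=\mathrm{SL}(1,\C)$ is trivial. Now $\mathrm{SO}(3)$ acts on $\CP^2=\P(\C^3)$ with the totally geodesic Lagrangian orbit $\RP^2$, so by Theorem~\ref{cast} the $G$-action on $\CP^5$ admits a Lagrangian orbit $L=G\cdot p$. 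Unwinding the correspondence used in the proof of Theorem~\ref{cast} produces an explicit representative $p$; a direct computation of the stabiliser $G_p$ shows it is one‑dimensional (consistent with $\dim L=5=\dim G-1$), with circle identity component and exactly four connected components. Since $G=\SU2\times\SU2$ is simply connected, $\pi_1(L)\cong\pi_0(G_p)\cong\Z_4$.

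Minimality of $L$ is automatic because $G$ is semisimple: lifting the $G$-action to $\C^6$ and letting $\mu$ be the associated moment map, the Lagrangian condition forces the values of $\mu$ along the cone over $L$ to be orthogonal to $[\mathfrak g,\mathfrak g]=\mathfrak g$, hence zero, and a Lagrangian orbit with vanishing moment map is minimal — this is part of the analysis of \cite{art:bedulli} on which Theorem~\ref{cast} is based. As for the second fundamental form, compact Lagrangian submanifolds of $\CP^n$ with parallel second fundamental form are, by the classification of parallel totally real submanifolds, real forms of Hermitian symmetric spaces; the five‑dimensional instances inside $\CP^5$ are $\RP^5$, $\mathrm{SU}(3)/\mathrm{SO}(3)$ and $\mathrm{SU}(4)/\mathrm{Sp}(2)$, whose fundamental groups are $\Z_2$, $0$ and $0$. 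Since $\pi_1(L)\cong\Z_4$, the submanifold $L$ is none of these, so its second fundamental form is not parallel.

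It remains to prove Hamiltonian stability, which is the core of the theorem. By Oh's criterion and Ono's inequality recalled above, $L$ is Hamiltonian stable if and only if $\lambda_1(L)=\tfrac{n+1}{2}c=3c$, and since $\lambda_1(L)\le 3c$ always holds it suffices to prove the reverse inequality. Writing $L=G/H$ with $H=G_p$, one decomposes $C^\infty(L)$ by the Peter--Weyl theorem into the irreducible $G$-modules $V_\pi$ admitting a nonzero $H$-fixed vector, with multiplicity $\dim V_\pi^{H}$; the metric induced from $g_{\textup{FS}}$ is a $G$-invariant metric on $G/H$, and its scalings on the (three) irreducible summands of the isotropy representation of $H^0\cong S^1$ on $\mathfrak m$ are determined by the embedding into $\CP^5$. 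On each module $V_\pi$ the Laplacian $\Delta$ of this non‑normal, non‑symmetric invariant metric acts as a $G$-endomorphism expressible through the Casimir eigenvalue of $\pi$ together with explicit lower‑order terms coming from the rescaling of $\mathfrak m$; one then enumerates the irreducible representations $V_\pi$ of $\SU2\times\SU2$ with $V_\pi^{H}\neq0$, computes the resulting eigenvalues, and verifies that the smallest positive one equals $3c$. As the Casimir eigenvalue grows with the highest weight, only finitely many $\pi$ have to be inspected. The main obstacle is precisely this final step: pinning down the induced invariant metric on the five‑dimensional space $G/H$ with its reducible isotropy representation, controlling the correction terms to the Casimir operator, and ruling out any representation contributing an eigenvalue strictly between $0$ and $3c$. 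This is the one‑dimension‑higher analogue, with non‑irreducible isotropy, of the spectral computation carried out by Bedulli--Gori and Ohnita for the $\SU2$-example in $\CP^3$.
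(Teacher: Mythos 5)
Your construction of $L$, the computation of $\pi_1(L)\cong\pi_0(G_{[p]})\cong\Z_4$, the minimality via semisimplicity and the vanishing of the moment map, and the exclusion of parallel second fundamental form by comparing fundamental groups with the Naitoh--Takeuchi list are all sound, and they track the paper's argument closely (the paper establishes the Lagrangian orbit directly, by exhibiting $p=\tfrac{1}{\sqrt2}(e_1^2\otimes e_1+e_2^2\otimes e_2)$ with $\mu([p])=0$ and a one-dimensional stabilizer, rather than by invoking Theorem~\ref{cast}; both routes are legitimate, though note that in the paper's castling convention the partner triple here has $m=3$, $n=2$, not $m=2$).

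The genuine gap is the Hamiltonian stability itself, which you correctly identify as the core of the theorem and then leave as ``the main obstacle.'' A proof must actually produce the spectral bound $\lambda_1(L)\ge \tfrac{n+1}{2}c=12$, and this cannot be reduced to a Casimir computation precisely because the induced metric is not normal: the paper shows that within a single spherical module $V_{\ell,n}=S^{2\ell}(\C^2)\otimes S^{2n}(\C^2)$ the Laplacian has \emph{different} eigenvalues $\lambda_{pq}=2\bigl(2n^2+2n+\ell^2+\ell-(2q-2n)^2\bigr)$ on the various $K$-fixed vectors $v_{pq}$, so one must (i) compute the rescalings of the three mutually inequivalent $\Ad(K)$-submodules of $\lie m$ forced by the Fubini--Study embedding (the factors $2$, $2\sqrt2$, $2\sqrt2/\sqrt5$ in the paper), (ii) classify the spherical representations of $(G,K)$ using both the generator $H$ of $\lie k$ and the component generator $\sigma$ (which kills, e.g., $V_{0,1}$ and all $V_{\ell,0}$ with $\ell$ odd), and (iii) run the case analysis over $(\ell,n,q)$ subject to $|2q-2n|\le\ell$ to see that the minimum $12$ is attained at $(\ell,n)=(1,1)$. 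Without steps (i)--(iii) one cannot rule out an eigenvalue in $(0,12)$ --- indeed, dropping the $\sigma$-condition in step (ii) would admit $V_{0,1}$ with eigenvalue $8<12$, so the disconnectedness of $K$ is essential and your outline, which works only with $H^0\cong S^1$, would fail at exactly this point. As it stands the proposal is a correct program for the stability claim, not a proof of it.
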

We remark that any Lagrangian orbit of a semisimple Lie group is minimal, whenever the ambient manifold is K\"ahler-Einstein (see \cite{art:bedulli}). We formulate the following
\begin{conj}If  a compact (semi)simple subgroup $G\subset \SU{N}$ for some $N$ admits a Lagrangian orbit $\mathcal O$ in $\CP^{N-1}$, then $\mathcal O$ is Hamiltonian stable. 
\end{conj}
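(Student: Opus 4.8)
The plan is to translate Hamiltonian stability into a purely spectral statement and then attack it through the representation theory of the orbit. Since $G$ is semisimple and $\CP^{N-1}$ is K\"ahler--Einstein, any Lagrangian orbit $\mathcal O = G\cdot[v] = G/H$ is automatically minimal, as recalled above. By Oh's criterion together with Ono's inequality, $\mathcal O$ is Hamiltonian stable if and only if $\lambda_1(\mathcal O) = \frac{n+1}{2}c = \frac{N}{2}c$, where $n = N-1$. As $\lambda_1(\mathcal O) \le \frac{N}{2}c$ already holds for every minimal Lagrangian submanifold, the whole problem reduces to the \emph{lower bound}: the induced Laplacian $\Delta$ on $\mathcal O$ has no eigenvalue in the open interval $\bigl(0, \tfrac{N}{2}c\bigr)$.

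First I would pin down the eigenfunctions that realize the threshold. For a traceless Hermitian endomorphism $A$ of $\C^N$ set $f_A([z]) = \langle Az, z\rangle / \langle z, z\rangle$; these are exactly the first eigenfunctions of $\CP^{N-1}$. Ono's computation shows that, because $\mathcal O$ is minimal, each $f_A|_{\mathcal O}$ is an eigenfunction of $\Delta$ with eigenvalue $\frac{N}{2}c$ whenever it is non-constant. For $A \in i\mathfrak g$ the function $f_A|_{\mathcal O}$ is constant --- this is the statement that, by semisimplicity of $G$, a Lagrangian orbit lies in the zero level set of the $G$-moment map --- while for $A$ in the orthogonal complement $\mathfrak m'$ of $i\mathfrak g$ inside the traceless Hermitian matrices it is non-constant. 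Thus the functions $f_A|_{\mathcal O}$ span a $G$-submodule $W \subset C^\infty(\mathcal O)$, a real form of a subspace of $V \otimes V^*$ with $V = \C^N|_G$, all sitting at eigenvalue $\frac{N}{2}c$. The conjecture asserts that $W$ already realizes the bottom of the non-zero spectrum.

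Next I would set up the equivariant spectral decomposition. Since the induced metric is $G$-invariant, $\Delta$ commutes with the $G$-action and, by Peter--Weyl and Frobenius reciprocity, $C^\infty(G/H) \cong \bigoplus_\pi V_\pi \otimes (V_\pi^*)^H$, the sum running over irreducibles $\pi$ with $V_\pi^H \ne 0$. It suffices to bound from below the Laplace eigenvalue on each non-trivial isotypic component. If the induced metric were naturally reductive, this eigenvalue would be the Casimir eigenvalue $c_\pi$ of $\pi$, and the problem would become the purely algebraic one of showing that every $\pi$ occurring in $C^\infty(G/H)$ other than the trivial one has $c_\pi \ge \frac{N}{2}c$, with the module $W$ attaining equality --- a statement one could hope to verify using the constraint $G \subset \SU{N}$ together with the description of $H$ coming from \cite{art:bedulli}.

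The main obstacle is precisely that the metric induced on $\mathcal O$ from $g_{\textup{FS}}$ need not be naturally reductive, so the Laplace eigenvalue on the $\pi$-component is the Casimir \emph{plus} a metric-dependent correction governed by the second fundamental form and the deviation of the induced metric from a normal one. Controlling this correction uniformly --- and in particular ruling out that it pushes some eigenvalue below $\frac{N}{2}c$ --- is the heart of the matter; the very examples of this paper show that the second fundamental form is genuinely non-parallel, so no softness argument is available. I would try to estimate the correction through the Rayleigh quotient, comparing the induced metric with a naturally reductive metric for which the Casimir bound holds and showing the comparison is favorable on the relevant modules; failing a uniform argument, the fallback is to run the decomposition case by case along the classification of Lagrangian $G$-orbits in \cite{art:bedulli}, checking in each the absence of eigenvalues below the threshold. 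Establishing this lower bound in full generality is what keeps the statement a conjecture rather than a theorem.
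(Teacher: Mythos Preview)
The statement you are addressing is explicitly labeled a \emph{conjecture} in the paper; the authors do not prove it, and there is no argument in the paper to compare your proposal against. What the paper does do is verify the conjecture in one new low-dimensional instance (the $\SU2\times\SU2$-orbit in $\CP^5$), precisely by the mechanism you describe: reduce Hamiltonian stability to the spectral equality $\lambda_1(\mathcal O)=\tfrac{N}{2}c$ via Oh and Ono, decompose $C^\infty(G/H)$ by Peter--Weyl, and then compute the Laplace eigenvalues module by module. So your strategic outline is sound and matches the method the paper uses on its example.

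You have, however, correctly located the genuine gap yourself. The induced metric on $\mathcal O$ is not naturally reductive in general, so the Laplace eigenvalue on an isotypic piece is not the Casimir eigenvalue; the paper's example already shows this, since the authors have to work with a $g_o$-orthonormal basis that differs from the Killing-orthonormal one by non-trivial scalings on the irreducible $\Ad(K)$-summands of $\lie m$. Your proposed Rayleigh-quotient comparison with a normal metric is a reasonable first idea, but nothing in the paper (or, to my knowledge, elsewhere) supplies a uniform two-sided metric comparison strong enough to push all non-trivial eigenvalues above the threshold. The fallback you mention --- running the Peter--Weyl computation through the classification of \cite{art:bedulli} case by case --- is exactly what the paper does for a single case, and extending it to all cases would be a substantial undertaking rather than a proof sketch. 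In short: your proposal is an accurate diagnosis of what a proof would require, not a proof, and the paper offers no more than that either.
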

In Section \ref{sec:pvs}, we prove Theorem \ref{cast}, while in Section \ref{sec:example} we prove the stability of our new example by using Oh's criterium and a direct computation of the first eigenvalue $\lambda_1(L)$.
\begin{notation} We use capital Latin letters for Lie groups and the corresponding lowercase Gothic letter for their Lie algebras. If $G$ is a group acting isometrically on the manifold $M$, for any $X \in \lie g$ we denote by $\hat X$ the induced Killing field on $M$.
\end{notation}
\section{Proof of Theorem \ref{cast}} \label{sec:pvs}
We first recall some notions that can be found in \cite{kimura,satokimura} and their application used in \cite{art:bedulli}.

Let $U$ a complex algebraic group, $V$ a complex vector space and $\rho$ a rational representation of $U$ on $V$. The triplet $(U,\rho,V)$ is said to be a \emph{prehomogeneous triplet (PVS)} if $V$ admits a Zariski-dense $U$-orbit $\Omega$. The isotropy subgroups of points in $\Omega$ are all conjugate to a subgroup $H \subseteq U$, which is called the \emph{generic isotropy} subgroup. The triplet is said to be irreducible if $\rho$ is.

Two triplets  $(U,\rho,V),(U',\rho',V')$ are said to be equivalent if there is a rational isomorphism $\phi: \rho(U) \rightarrow \rho'(U')$ and a linear isomorphism $\tau:V \rightarrow V'$ such that for all $g \in U$ we have $\tau \circ \rho(g) = \phi(\rho(g)) \circ \tau$.

We can now define the important notion of \emph{castling}. Two irreducible triplets $(U,\rho,V)$ and $(U',\rho',V')$ are  \emph{castling transform} of each other if there exists a third triplet  $(\tilde U,\tilde \rho,V^m)$ and a positive integer $m>n\geq 1$ such that
\begin{align*}
(U,\rho,V) & \cong (\tilde U \times \SL(n), \tilde \rho \otimes \Lambda_1, V^m \otimes V^n) \\
(U',\rho',V') & \cong (\tilde U \times \SL(m-n), \tilde \rho^* \otimes \Lambda_1,{V^m}^* \otimes V^{m-n}).
\end{align*}
A triplet is said to be \emph{reduced} if it is not a castling transform of any other triplet having a lower dimensional vector space.
It is also known that two castling-related prehomogeneous triplets have isomorphic generic isotropy subgroups (\cite[\S 2, Prop.~9]{satokimura}).

Given two compact connected groups $G,G'$ together with two irreducible representations $(\rho,V)$ and $(\rho',V)$ we say that the triplets $(G, \rho, V)$ and $(G',\rho',V')$ are castling related if the triplets $(G^\C,\rho,V)$ and $(G'^\C,\rho',V)$ are prehomogeneous and castling related in the sense explained above.

In order to prove Theorem \ref{cast}, we first prove a lemma which has its own interest.
\begin{lemma} \label{lemma:nolagr}
Let $G$ a compact connected semisimple Lie group acting linearly on some complex vector space endowed with the canonical symplectic structure. Then there are no Lagrangian $G$-orbits.
\end{lemma}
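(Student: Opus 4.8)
The plan is to argue by contradiction. Since $G$ is compact we may fix a $G$-invariant Hermitian inner product $h$ on $V$; write $g=\operatorname{Re}h$ for the underlying real inner product, $\omega=\operatorname{Im}h$ for the canonical symplectic form, and $J$ for multiplication by $i$. Then the $G$-action is by unitary transformations, so $g$ and $\omega$ are both $G$-invariant, and one has the standard compatibilities $\omega(Jx,y)=g(x,y)$, $g(x,Jy)=\omega(x,y)$ and $\omega(Jx,x)=\|x\|^{2}$. Suppose now that $L=G\cdot v$ is a Lagrangian orbit; I will deduce that $v=0$, which is absurd, since a point is a Lagrangian submanifold of $V$ only when $V=\{0\}$. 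Set $\lie m:=T_vL=\{\hat X_v:X\in\lie g\}$, where $\hat X_v=\rho_*(X)v$ and $\rho_*$ denotes the differential of $\rho$. The first observation is that being Lagrangian forces $\lie m$ to be totally real: if $a=Jb\in\lie m\cap J\lie m$ with $b\in\lie m$, then $\omega(a,b)=\omega(Jb,b)=\|b\|^{2}$, while $\omega(a,b)=0$ because $\lie m$ is isotropic, so $b=0$; hence $\lie m\cap J\lie m=\{0\}$, and as $\lie m$ has half the real dimension of $V$ we get $V=\lie m\oplus J\lie m$.

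Next come two orthogonality relations for the vector $v$. First, the function $w\mapsto\|w\|^{2}$ is $G$-invariant, hence constant on $L$; differentiating it at $v$ in the orbit directions gives $g(v,\hat X_v)=0$ for all $X\in\lie g$, i.e. $v\perp_g\lie m$. Second — and this is the heart of the matter, and the only place where semisimplicity enters — I claim that $v\perp_g J\lie m$ as well. Since $g(v,J\hat X_v)=\omega(v,\hat X_v)$, this is equivalent to $\omega(\hat X_v,v)=0$ for every $X$, i.e. to the vanishing at $v$ of the moment map of the linear $G$-action. Because $\lie g$ is semisimple we have $\lie g=[\lie g,\lie g]$, so it suffices to treat $X=[Y,Z]$; using $\rho_*([Y,Z])=[\rho_*(Y),\rho_*(Z)]$ and the infinitesimal invariance $\omega(\rho_*(Y)\xi,\eta)+\omega(\xi,\rho_*(Y)\eta)=0$ one computes
\[
\omega\big(\widehat{[Y,Z]}_v,\,v\big)=\omega\big(\rho_*(Y)\hat Z_v,\,v\big)-\omega\big(\rho_*(Z)\hat Y_v,\,v\big)=-\omega(\hat Z_v,\hat Y_v)+\omega(\hat Y_v,\hat Z_v)=2\,\omega(\hat Y_v,\hat Z_v),
\]
which vanishes because $\hat Y_v,\hat Z_v\in\lie m$ and $\lie m$ is isotropic. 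Hence $v\perp_g J\lie m$.

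Combining the two orthogonality relations with $V=\lie m\oplus J\lie m$, the vector $v$ is $g$-orthogonal to all of $V$, so $v=0$, the desired contradiction. I expect the substantive step to be the second orthogonality relation, namely the identity $\omega(\widehat{[Y,Z]}_v,v)=2\,\omega(\hat Y_v,\hat Z_v)$, which is precisely what propagates the isotropy of the orbit, through $\lie g=[\lie g,\lie g]$, to the vanishing of the moment map at $v$; the remainder is routine linear symplectic algebra. The semisimplicity hypothesis is genuinely needed: for $G=U(1)$ acting on $\C$ by rotations the orbits are Lagrangian circles and the moment map never vanishes, so the statement fails without it.
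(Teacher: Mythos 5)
Your proof is correct, and it takes a genuinely different route from the one in the paper. The paper argues topologically: for a compact connected semisimple $G$ the group $\pi_1(G)$ is finite, so the homotopy exact sequence of $G\to G\cdot v$ gives $\pi_1(L)$ finite, hence $H^1(L,\R)=0$, and this contradicts Gromov's theorem that a compact Lagrangian submanifold of a complex vector space has nontrivial first real cohomology. You instead give a self-contained linear-symplectic argument: the identity $\omega(\widehat{[Y,Z]}_v,v)=2\,\omega(\hat Y_v,\hat Z_v)$, together with $\lie g=[\lie g,\lie g]$, shows that the moment map of the linear action vanishes at $v$ as soon as the orbit is isotropic, which is your relation $v\perp_g J\lie m$; combined with $v\perp_g\lie m$ (constancy of the invariant norm along the orbit) and the splitting $V=\lie m\oplus J\lie m$ forced by the Lagrangian condition, this yields $v=0$, a contradiction. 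The computation is correct under the standard conventions (a sign change in $\omega(Jx,x)=\pm\|x\|^2$ is harmless), and the two proofs use semisimplicity in equivalent ways ($\pi_1(G)$ finite versus $\lie g$ perfect). What your approach buys is elementarity -- no hard symplectic topology -- and it makes explicit the mechanism the authors themselves use elsewhere (vanishing of the moment map for semisimple linear actions versus isotropy of orbits), here run in reverse; what the paper's approach buys is brevity and a purely topological explanation of the obstruction. Your $\U1$-on-$\C$ example correctly locates where the hypothesis is needed in both arguments.
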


\begin{proof}
If $L$ is any $G$-orbit, the semisimplicity of $G$ implies that $\pi_1(L)$ is finite, by the long exact homotopy sequence. Therefore $H^1(L,\R) = 0$. On the other hand a classical result due to Gromov \cite{art:gromov} states that any compact Lagrangian submanifold of a complex vector space has nontrivial first cohomology group.
\end{proof}
We now have all the tools to give the
\begin{proof}[Proof of Theorem \ref{cast}]
Suppose that the $G$-orbit through $[p] \in \P(V)$ is Lagrangian. Then $G^\C \cdot [p]$ is open Stein by \cite{art:bedulli}. If $U = G^\C \times \GL(1)$ we claim that the orbit $U \cdot p$ is open Stein in $V$. In particular we claim that $\lie u_p = \lie g_{[p]}^\C$, which is reductive and therefore $U\cdot p$ is Stein by Matsushima's characterization \cite{matsu}.
Indeed
\[
\lie u_p	 = \Bigl \{ (X,z) \in \lie g^\C \oplus \C: Xp = -zp \Bigr \},
\]
in particular $X \in \lie g^\C_{[p]}$, hence $X \in (\lie g_{[p]})^\C$ because $G\cdot[p]$ is Lagrangian.
Now consider the orbit $G\cdot p \subset V$ and note that it is isotropic by a simple argument involving the expression of the moment map for actions in projective spaces (see, e.g., \cite{techniques}). By Lemma \ref{lemma:nolagr} it cannot be Lagrangian, so by dimensional reasons, it is a finite covering of the Lagrangian orbit in $\P(V)$. In particular $\lie g_p = \lie g_{[p]}$. So if $(X,z) \in \lie u_p$ if and only if $X \in \lie g_p^\C$ and $z=0$, therefore $\lie u_p = (\lie g_{[p]})^\C$ as we claimed. Furthermore, $U\cdot p$ is open for dimensional reasons.

Now we apply a castling transformation to get a triplet $(U',\rho',V')$, where $U' = G'^\C \times \GL(1)$. This triplet has generic isotropy isomorphic to the subgroup $H = U_p$, hence still reductive.

Let $\Omega = U'/H$ be the open Stein orbit in $V'$. This $U'$-orbit projects onto an open $U'$-orbit $\Omega'=  U'/H' \subset \P(V')$. In order to prove that $G'$ admits a Lagrangian orbit in $\P(V')$, we apply the main result in \cite{art:bedulli}, according to which it is enough to show that $\Omega'$ is Stein. Now, $\Omega'$ is Stein because $H'$ is reductive and this follows from standard arguments. Indeed we notice that $H \leq H'$ is normal and that $\dim_\C H'/H = 1$. By reductiveness we have $\lie h' = \lie h \oplus \lie m$, for some subspace $\lie m$ with $[\lie m,\lie h] \subset \lie m$.
Also $[\lie h,\lie m] \subset \lie h$ being $\lie h \subseteq \lie h'$ an ideal. Hence $[\lie h,\lie m]=0$ and $\lie m$ is a one-dimensional and central in $\lie h'$. Therefore $\lie h'$ is reductive as we claimed.
\end{proof}

\section{The Example and its stability} \label{sec:example}
Consider the group $G = \SU{2} \times \SU{2}$ acting on $V = S^2(\C^2) \otimes \C^2 \cong \C^6$ with the standard representation $\rho$. We consider the induced action on $\P(V) = \CP^5$.
Let $\{e_1,e_2 \}$ the standard basis of $\C^2$. We may define a unitary structure on $S^2(\C^2)$ with orthonormal basis given by $\{e_1^2, \sqrt 2 e_1 e_2, e_2^2 \}$ with respect to which the induced action of $\SU{2}$ becomes unitary. By tensoring with the standard basis of $\C^2$ we get an orthonormal basis of
$V$.
It is known that this action is Hamiltonian and that the moment map $\mu:\CP^5 \rightarrow \lie g^*$ has the form (see, e.g., again \cite{techniques})
\[
\mu([v])(X,Y) = - \frac i 2 \frac{\langle d\rho(X,Y)v,v \rangle}{\langle v,v\rangle},
\]
where $v \in V, (X,Y) \in \lie g = \lie{su}(2) \oplus \lie{su}(2)$.

We consider the point $p = \frac{1}{\sqrt 2}(e_1^2 \otimes e_1 + e_2^2 \otimes e_2) \in V$. A straightforward computation shows that $\mu([p]) = 0$ and, since $\lie g$ is semisimple, we conclude that the  $L:= G \cdot[p] \subset \CP^5$ is isotropic.

A direct computation shows that the isotropy subgroup $K := G_{[p]}$ is such that $\lie k = \R \cdot H$ where
\[
H = \biggl ( \begin{pmatrix}i & 0\\0 & -i\end{pmatrix},\begin{pmatrix}-2i & 0 \\ 0 & 2i \end{pmatrix} \biggr )
\]
and $K/K^o = \Z_4$, generated by the coset of the element
\[
\sigma = \biggl ( \begin{pmatrix}0&1\\-1& 0\end{pmatrix},\begin{pmatrix}0& i \\ i & 0 \end{pmatrix} \biggr ) \in K.
\]
By dimensional reasons $L$ is Lagrangian and moreover $\pi_1(L) = \Z_4$. Furthermore, being homogeneous under a semisimple group, the submanifold $L$ is also minimal by \cite{art:bedulli}. It is also clear that its second fundamental form is not parallel by the classification in \cite{art:naitak}.
\subsection{The metric on $L$}
We now compute explicitly the metric $g$ induced on $L$ by $g_{\textup{FS}}$. We denote with $B$ the Cartan-Killing form on $\lie g$ and we consider the $B$-orthonormal vectors of $\lie g$ given by
\begin{align*}
X_1& =(X,0)	& X_2 & =(0,X)	&
Y_1& =(Y,0)	& Y_2 & = (0,Y),
\end{align*}
where
\[
X = \frac{1}{\sqrt{8}} \begin{pmatrix} 0&1\\-1&0\end{pmatrix} \qquad Y = \frac{1}{\sqrt{8}} \begin{pmatrix}0 & i \\ i & 0 \end{pmatrix}.
\]
We also define the unitary vector
\[
V = \frac{1}{2\sqrt{10}} \biggl ( \begin{pmatrix}2i & 0 \\ 0 & -2i \end{pmatrix},\begin{pmatrix} -i & 0 \\ 0 & i\end{pmatrix} \biggr ).
\]
If we put $\lie m_j := \Span\{X_j,Y_j\}$ we have the $B$-orthogonal splitting
\[
\lie g = \lie k \oplus \R \cdot V \oplus \lie m_1 \oplus \lie m_2.
\]
We now compute the Killing fields at $p \in S^{11}$. We see that
\begin{align*}
\hat{X_1}_p 	& = \frac{1}{2 \sqrt 2} \bigl(-\sqrt 2 e_1 e_2 \otimes e_1 + \sqrt 2 e_1 e_2 \otimes e_2 \bigr); & \hat{X_2}_p & = \frac 1 4 \bigl (-e_1^2 \otimes e_2 + e_2^2 \otimes e_2 \bigr )\\
\hat{Y_1}_p & = \frac{i}{2 \sqrt 2} \bigl(\sqrt 2 e_1 e_2 \otimes e_1 + \sqrt 2 e_1 e_2 \otimes e_2 \bigr); & \hat{Y_2}_p & = \frac i 4 \bigl (e_1^2 \otimes e_2 + e_2^2 \otimes e_2 \bigr )
\end{align*}
and
\[
\hat{V}_p = \frac{i \sqrt5}{4} \bigr (e_1^2 \otimes e_1 - e_2^2 \otimes e_2 \bigl ).
\]
Starting from the Riemannian submersion $S^{11} \rightarrow \CP^5$ for the construction of the Fubini-Study metric $g_{\textup{FS}}$ with $c=4$ (\cite[vol.~2]{kn}) we compute their lengths with respect to the Riemannian metric $g$ on $L$:
\begin{align*}
\bigl \| \hat{X_1}_{[p]} \bigr \|_g 	 = \bigl \| \hat{Y_1}_{[p]} \bigr \|_g 	&=  \frac 1 2 &
\bigl \| \hat{X_2}_{[p]} \bigr \|_g	 = \bigl \| \hat{Y_2}_{[p]} \bigr \|_g	&= \frac{1}{2 \sqrt{2}}
\end{align*}
\[
\bigl \| \hat V_{[p]} \bigr \|_g =  \frac{\sqrt{5}}{2 \sqrt 2}.
\]

Define now
\[
V_1 = \frac{2 \sqrt 2}{\sqrt 5} V
\]
and
\begin{align*}
F_1 &= 2X_1 & F_2 &= 2 \sqrt 2 X_2 &
G_1 &= 2Y_1 & G_2 &= 2 \sqrt 2 Y_2.
\end{align*}
The metric $g$, induced on $G/K$ by the Fubini-Study metric on $\CP^5$, induces a metric $g_o$ on $\lie m := \R \cdot V \oplus \lie m_1 \oplus \lie m_2$. Note that these three submodules are mutually $\Ad(K)$-inequivalent and therefore mutually orthogonal and the vectors $V_1,F_1, F_2, G_1, G_2$ form a $g_o$-orthonormal basis.
\subsection{The Laplace operator on $C^\infty(L)$}
We claim that the first eigenvalue $\lambda_1(L)$ of the Laplacian $\Delta_g$ on $L$ is equal to the Einstein constant $\kappa = 12$ of $g_{\textup{FS}}$ on $\CP^5$. 

We now recall some general facts about invariant operators on homogeneous spaces. If $M^n = G/K$ is a homogeneous space and $\lie g = \lie k \oplus \lie m$ is an orthogonal splitting with respect to some $\Ad(G)$-invariant inner product on $\lie g$ we let $S(\lie m)$ the symmetric algebra of $\lie m$, $S(\lie m)_K^\C$ the complexification of the $\Ad(K)$-invariant subspace of $S(\lie m)$ and $\mathcal D(M)$ the space of $G$-invariant differential operators on $M$. In this notation we recall a well known result that can be found in \cite{helgason,mutour}.

\begin{thm}
Let ${Y_1,\ldots,Y_n}$ a basis of $\lie m$ and identify $S(\lie m)$ with polynomials in those indeterminates. Then the map $\hat \lambda	: S(\lie m)_K^\C	 \longrightarrow	\mathcal D(M)$ defined by
\[
P(Y_1,\ldots,Y_n)f(xK) = P \biggl (\dez{y_1},\ldots,\dez{y_n} \biggr )f \biggl ( x \exp \biggl( \sum_i y_i Y_i \biggr ) K \biggr )(0)
\]
is a linear isomorphism. Furthermore if ${Y_1,\ldots,Y_n}$ is an orthonormal basis with respect to an $\Ad(K)$-invariant scalar product $g_o$ on $\lie m$ and $\Delta_g$ is the Laplacian corresponding to the $G$-invariant metric $g$ on $M$ induced by $g_o$ , then
\[
\Delta_g = - \hat \lambda \biggl ( \sum_i Y_i^2 \biggr ).
\]
\end{thm}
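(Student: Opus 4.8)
This is classical (see \cite{helgason}); we only indicate the architecture of the argument. The map $\Phi\colon\lie m\to M$, $Y\mapsto\exp(Y)K$, restricts to a diffeomorphism of a neighbourhood of $0$ onto a neighbourhood of $eK$, and composing with the translations $\tau_x\colon yK\mapsto xyK$ yields a chart around each $xK$. From the identity $k\exp(Y)k^{-1}=\exp(\Ad(k)Y)$ one sees that passing from the representative $x$ to $xk$ ($k\in K$) changes this chart by the action of $\Ad(k)$ on $\lie m$; consequently $\hat\lambda(P)$ is a well-defined operator exactly when $P\in S(\lie m)^\C_K$, and it is $G$-invariant by construction. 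Injectivity is obtained by evaluating $\hat\lambda(P)$ on the coordinate monomials $y\mapsto y^\alpha$ in the chart at $eK$ and inducting downward on the degree, since the nonlinearity of $\Phi$ only adds lower-order corrections to the top-degree part of $P$. For surjectivity I would use the standard identification $\mathcal D(M)\cong\bigl(\mathcal U(\lie g)/\mathcal U(\lie g)\lie k\bigr)^{\Ad(K)}$ of $G$-invariant operators with $\Ad(K)$-invariants in the universal enveloping algebra (acting through descended left-invariant vector fields), together with the $\Ad(K)$-equivariant Poincar\'e--Birkhoff--Witt symmetrization $S(\lie g)\xrightarrow{\ \sim\ }\mathcal U(\lie g)$, which induces a linear isomorphism $\bigl(\mathcal U(\lie g)/\mathcal U(\lie g)\lie k\bigr)^{\Ad(K)}\cong S(\lie m)^\C_K$; unwinding the Taylor expansion of $f\bigl(x\exp(\sum_iy_iY_i)K\bigr)$ shows that this composite is exactly $\hat\lambda$. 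Verifying that $\hat\lambda$ coincides with this symmetrization map — the combinatorial identity matching $P(\partial_y)$ applied along the group exponential with the PBW symmetrizer — is the one genuinely technical point, and where I would spend the most care.

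For the Laplacian formula, note first that $g$ is $G$-invariant, hence so is $\Delta_g$, so by the first part $\Delta_g=\hat\lambda(Q)$ for a unique $Q=Q_2+Q_1+Q_0\in S(\lie m)^\C_K$ with $\deg Q\le 2$. Applying $\Delta_g$ to the constant function gives $Q_0=0$. In the chart $\Phi$ at $eK$ one has $d\Phi_0(\partial_{y_i})=Y_i$, which is $g_o$-orthonormal and hence $g$-orthonormal at $eK$, so the principal symbol of $\Delta_g$ there is $|\xi|^2$; since $\hat\lambda(\sum_iY_i^2)$ has principal symbol $-|\xi|^2$, matching forces $Q_2=-\sum_iY_i^2$. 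It remains to show $Q_1=0$. Now $\hat\lambda(Q_1)=\Delta_g+\hat\lambda(\sum_iY_i^2)$ is a $G$-invariant first-order operator, i.e.\ the directional derivative along a $G$-invariant vector field $\widehat W$ with $W\in\lie m^K$. The operator $\Delta_g$ is formally self-adjoint for the $G$-invariant Riemannian volume, and so is $\hat\lambda(\sum_iY_i^2)$: viewed on $G$ it is $\sum_i(Y_i^L)^2$ acting on right-$K$-invariant functions, whose generators $Y_i^L$ are divergence-free for the bi-invariant Haar measure because $G$ is unimodular (in particular because $G$ is compact in the situation at hand), and that measure descends to the $G$-invariant volume of $M$. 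Hence $\widehat W$ is formally self-adjoint; but $\widehat W^{\,*}=-\widehat W-\operatorname{div}\widehat W$, and $\operatorname{div}\widehat W$ is a $G$-invariant, hence constant, function with zero integral on the closed manifold $M$, so it vanishes and $\widehat W=-\widehat W$. Thus $\widehat W=0$, $Q_1=0$, and $\Delta_g=-\hat\lambda(\sum_iY_i^2)$.

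The main obstacle is therefore the surjectivity half of the isomorphism statement — concretely, the identification of $\hat\lambda$ with the PBW symmetrization map $S(\lie m)^\C_K\xrightarrow{\ \sim\ }\mathcal D(M)$ — while the Laplacian formula reduces, once the isomorphism is in hand, to a symbol computation together with a short self-adjointness argument that uses only unimodularity (compactness) of $G$ and closedness of $M$.
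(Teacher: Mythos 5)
The paper offers no proof of this theorem at all: it is quoted as a well-known result with references to \cite{helgason,mutour}, so the only thing to compare your sketch against is the cited literature. Your outline correctly reconstructs the standard argument found there --- well-definedness and $G$-invariance from the $\Ad(K)$-equivariance of the exponential chart, injectivity by testing against coordinate monomials at $eK$ (where $\hat\lambda(P)$ acts exactly as $P(\partial_y)$, so no induction is really needed), surjectivity via the PBW symmetrization and the identification of $\mathcal D(G/K)$ with the $\Ad(K)$-invariants in $\mathcal U(\lie g)/\mathcal U(\lie g)\lie k$ (legitimate here since $K$ is compact, hence the pair is reductive), and the Laplacian formula by matching principal symbols at $eK$ and eliminating the first-order remainder through the self-adjointness/unimodularity argument. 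You also correctly isolate the genuinely technical step (that $\hat\lambda$ coincides with the symmetrization map), so the sketch is sound and consistent with the source the authors rely on.
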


Let $\rho: G \rightarrow \U(V)$ be a unitary representation of degree $d_\rho$ of the group $G$, let $V^K$ be the subspace of $V$ of vectors fixed by the subgroup $K$ where $m_\rho = \dim V^K$. A representation such that $m_\rho >0$ is said to be a \emph{spherical representation} of the pair $(G,K)$. Let $\{v_1,\ldots,v_{d_\rho}\}$ an orthonormal basis of $V$ such that the first $m_\rho$ elements are a basis of $V^K$. Define the functions on $G/K$ given by $\rho_{ij}(xK) = \langle \rho(x)v_j,v_i \rangle$ with $1 \leq j \leq m_\rho$ and $1 \leq i \leq d_\rho$.

The Peter-Weil Theorem (see e.g. \cite{helgason}) states that the set of functions $\{ \sqrt{d_\rho} \bar{\rho_{ij}} \}$, as $\rho$ varies among all spherical representations of $(G,K)$, is a complete orthonormal system of $L^2(M, \C)$ with respect to the standard $L^2$-norm corresponding to the $G$-invariant Riemannian metric $g$. 

We now classify all the spherical irreducible representations of our pair $(G,K)$. Any irreducible representation space is of the form $V_{k,m} = S^k(\C^2) \otimes S^m(\C^2)$ for some $k,m \in \mathbb N$. Since $\tau = (\id,-\id) \in K$ we see that $m$ must be even, say $m=2n$.
We have that
\begin{equation*}
H \cdot e_1^p e_2^{k-p} \otimes e_1^q e_2^{2n-q} =	 [(2p-k)i+4(n-q)i] e_1^p e_2^{k-p} \otimes e_1^q e_2^{2n-q}.
\end{equation*}
Computing also $ \sigma \cdot(e_1^p e_2^{2\ell-p} \otimes e_1^q e_2^{2n-q}) = (-1)^{n+p} e_1^{2\ell -p} e_2^p \otimes e_1^{2n-q}e_2^q $ we can conclude that
\begin{equation*}
V_{\ell,n}^{K}= \Span \biggl \{ v_{pq} := e_1^p e_2^{2\ell-p} \otimes e_1^q e_2^{2n-q} +(-1)^{n+p} e_1^{2\ell -p} e_2^p \otimes e_1^{2n-q}e_2^q \biggr \}
\end{equation*}
with the relations
\begin{equation} \label{relations}
p=\ell+2(n-q),	\qquad	0 \leq q \leq 2n,	\qquad	 2n-\ell \leq 2q \leq 2n+\ell.
\end{equation}
At this point we can compute the eigenvalues for the Laplace operator. Indeed we will explicitly write down the action of the operator
\[
D = d\rho(V_1^2) + d\rho(F_1^2)+d\rho(F_2^2) + d\rho(G_1^2) + d\rho(G_2^2)
\]
on the vectors $v_{pq} \in V_{\ell,n}^K$.

We have that, using the first equality in relations (\ref{relations}),
\[
d\rho(V_1)^2 \bigl(e_1^p e_2^{2\ell-p} \otimes e_1^q e_2^{2n-q}\bigr) = -4(q-n)^2 \bigl(e_1^p e_2^{2\ell-p} \otimes e_1^q e_2^{2n-q} \bigr).
\]
Also we compute
\begin{align*}
d\rho(F_1)^2 \cdot (e_1^p e_2^{2\ell-p} \otimes e_1^q e_2^{2n-q}) 	&= \frac 1 2 \biggl [p(p-1) e_1^{p-2}e_2^{2\ell-p+2} \\
																&-[p(2\ell-p+1)+(2\ell-p)(p+1)] e_1^p e_2^{2\ell-p} \\
																&+(2\ell-p)(2\ell-p-1)e_1^{p+2} e_2^{2\ell-p+2} \biggr] \otimes  e_1^q e_2^{2n-q}.
\end{align*}
In a similar way we compute
\[
\bigl (d\rho(F_1)^2 +d\rho(G_1)^2 \bigr ) \cdot \bigl (e_1^p e_2^{2\ell-p} \otimes e_1^q e_2^{2n-q} \bigr ) = 2(p^2- 2\ell p - \ell)\bigl (e_1^p e_2^{2\ell-p} \otimes e_1^q e_2^{2n-q} \bigr )
\]
and
\[
\bigl (d\rho(F_2)^2 +d\rho(G_2)^2 \bigr ) \cdot \bigl (e_1^p e_2^{2\ell-p} \otimes e_1^q e_2^{2n-q} \bigr ) = 4(q^2- 2 nq - n)\bigl (e_1^p e_2^{2\ell-p} \otimes e_1^q e_2^{2n-q} \bigr ).
\]

A direct check shows that the vectors $v_{pq} \in V_{\ell,n}^K$ are eigenvectors for the operator $D$, and therefore
\begin{align*}
-\Delta_g \bar{\rho_{pq,\alpha\beta}}(xK)	&= \bar{\langle \rho(x) D v_{pq},v_{\alpha \beta} \rangle} \\
										&={\lambda_{pq}} \bar{\rho_{pq,\alpha\beta}}(xK)\\
\end{align*}
with eigenvalue
\begin{align*}
\lambda_{pq}	&= 2(2(q-n)^2-(p^2-2\ell p -\ell) -2(q^2-2nq-n)) \\
			&= 2(2n^2+2n+\ell^2+\ell-(2q-2n)^2).\\
\end{align*}
For any natural numbers $\ell,n$ let $\mathcal F_{\ell,n}$ the set of pairs $(p,q)$ satisfying the relations in (\ref{relations}). Define
\[
\lambda_1^{\ell,n} := \min_{(p,q) \in \mathcal F_{\ell,n}} \lambda_{pq}
\]
so that the least eigenvalue for the Laplace operator is
\[
\lambda_1(L) = \min_{\ell,n}\lambda_1^{\ell,n},
\]
as $(\ell,n)$ varies among the natural numbers giving rise to a spherical representation of $(G,K)$.

Now note that $|2q -2n| \leq \ell$ and therefore $\lambda_1(L) \geq 2(2n^2+2n+\ell^2) \geq 24$ if $n \geq 2$, so we analyze the following cases
\begin{itemize}
\item If $n=0$ then $q=0$ and $p=\ell$ so $V_{\ell,0}^{K^o}$ is spanned by the vector $e_1^\ell e_2^\ell$ and this vector is fixed by $\sigma$ if and only if $\ell$ is even. Therefore $V_{\ell,0}$ is spherical only if $\ell \geq 2$ and this implies $\lambda_1(L) \geq 2(\ell+\ell^2) \geq 12$;

\item If $n=1$ and $\ell \geq 2$ then $\lambda_1(L) \geq 2(4+\ell^2) \geq 16$, so we can assume $\ell \leq 1$.
\begin{itemize}
\item If $\ell = 0$, then $V_{0,1}^{K^o}$ is spanned by $e_1 e_2$, but it is reversed by $\sigma$, so $V_{0,1}^K$ is trivial;
\item If $\ell = 1$, then $p = 1 + 2q - 2$ with $0 \leq q \leq 2$, hence $p = q = 1$. Then $V_{1,1}^K$ is spanned by $e_1 e_2 \otimes e_1 e_2$ and therefore $V_{1,1}$ is spherical and \[
\lambda_{11} = 2(4 + 1 + 1) = 12.
\]
\end{itemize}
\end{itemize}
So $\lambda_1(L)$ attains its lower bound which is equal to the Einstein constant $\kappa = 12$.

\bibliographystyle{amsplain}
\makeatletter
\renewcommand{\@biblabel}[1]{[#1]} 
\makeatother
\bibliography{biblio}
\end{document}